\newtheorem{defn}{Definition}[section]
\newtheorem{thm}[defn]{Theorem}
\newtheorem{lem}[defn]{Lemma}
\newtheorem{cor}[defn]{Corollary}
\newtheorem{prop}[defn]{Proposition}
\newtheorem{rem}[defn]{Remark}
\newcommand{\N}{\mathbb{N}}
\newcommand{\R}{\mathbb{R}}
\newcommand{\C}{\mathbb{C}}
\newcommand{\Z}{\mathbb{Z}}
\newcommand{\T}{\mathbb{T}}
\newcommand{\n}{\|}
\begin{document}
\title{Completely bounded Paley projections on anisotropic Sobolev spaces on tori }
\author{Yanqi QIU}

\begin{abstract}
We study the existence of certain completely bounded Paley projection on the anisotropic Sobolev spaces on tori. Our result should be viewed as a generalization of a similar result obtained by Pe{\l}czy{\'n}ski and Wojciechowski in \cite{PW}. By a transference method, we obtain similar results on the Sobolev spaces on  quantum tori.
\end{abstract}
\maketitle

\section{Introduction}
Let $S \subset \N^d$ be a finite subset, containing the origin and satisfying some saturation conditions. The anisotropic Sobolev space $W^S_1(\T^d)$ is defined via the norm $$\n f \n_{S, 1} =  \sum_{\gamma \in S} \n \partial^\gamma f\n_{L_1(\T^d)}.$$
In \cite{PW}, necessary and sufficient conditions on $S$ are given under which there exist the so-called Paley projections on $W^S_1(\T^d)$. 

By the definition, $W^S_1(\T^d)$ embeds isometrically in $\ell_1^{|S|}(L_1(\T^d))$, it is well-known that on the latter space, there exists a natural operator space structure, and we will equip $W^S_1(\T^d)$ with the sub-operator space structure via the above embedding.

Following the proofs in \cite{PW}, we show that under the same conditions on $S$, the projections considered by Pe{\l}czy{\'n}ski and Wojciechowski are in fact completely bounded. The complete boundedness of these projections can be applied to obtain similar results on the Sobolev space $W^S_1(\T_\theta^d)$ associated to the quantum torus $\T^d_\theta$.

\section{Prelininaries}
Denote by $\N$ the set of non-negative integers. Fix a positive integer $d \ge 1$. The usual scalar product on the Euclidian space $\R^d$ is denoted by $\langle \cdot, \cdot \rangle$. We denote by $\T^d$ the group $(\R/2 \pi \Z)^d$ equipped with its normalized Haar measure $dx$, it will be identified with the cube $[-\pi, \pi)^d$ in a standard way. The dual group of $\T^d$ is $\Z^d$ such that to each $n \in \Z^d$ is assigned the character $\chi_n: \T^d \to \C$ defined by $\chi_n(x) = e^{i \langle x, n\rangle}$. Trigonometric polynomials are complex linear combinations of characters. The set of trigonometric polynomials on $\T^d$ is denoted by $\mathscr{P}_d$.

To each $\gamma = (\gamma(j)) \in \N^d$, we associate with the partial derivative $$\partial^\gamma= \frac{\partial^{|\gamma|}}{\partial^{\gamma(1)}_{x(1)} \partial^{\gamma(2)}_{x(2)} \dots \partial^{\gamma(d)}_{x(d)}},$$ where $| \gamma | = \gamma(1) + \gamma(2) + \cdots \gamma(d)$.
 
A smoothness $S$ is a finite subset of $\N^d$ which contains the origin 0, and such that: if $\alpha= (\alpha(j)) \in S$ then every $\beta = (\beta(j)) \in \N$ such that $\beta(j) \le \alpha(j)$ for $j = 1, 2, \cdots, d$ belongs to $S$.

For each $\gamma \in \N^d$, we define the symbol $\sigma_\gamma: \R^d \to \C$ as the function: if $x= (x(j)) \in (\R\setminus 0)^d$, then $$\sigma_\gamma(x) = i^{|\gamma|} x^\gamma = \prod_{j = 1}^d (i x(j))^{\gamma(j)},$$ otherwise, $\sigma_\gamma(x) = 0$.

The fundamental polynomial of a smoothness $S$ is $$Q_S = \sum_{\gamma \in S} | \sigma_\gamma |^2,$$ which is a non-negative function on $\R^d$.

The Sobolev space $W^S_p(\T^d)$ is defined as the completion of $\mathscr{P}_d$ with respect to the norm defined as following: if $f \in \mathscr{P}_d$, then \begin{eqnarray}\label{def_sob}\n f \n_{S, p} : = \Big(\sum_{\gamma \in S} \n \partial^\gamma f \n_{L_p(\T^d)}^p\Big)^{1/p}.\end{eqnarray}

\begin{rem}
The original definition of $\n f\n_{S,p}$ in \cite{PW} is $$\n f \n_{S,p} = \Big(\int_{\T^d} \Big(\sum_{\gamma \in S} | \partial^\gamma f(x) |^2\Big)^{p/2} dx\Big)^{1/p},$$ which is equivalent to our definition since $S$ is a finite set.
\end{rem}

Let $f \in L_1(\T^d)$, its spectrum  $\text{spec}(f)$ is $$\text{spec}(f) : = \{ n \in \Z^d: \hat{f}(n) = \int_{\T^d} f(x) e^{-i \langle x, n \rangle} dx \ne 0\}.$$  

Let $\Lambda \subset \Z^d$ be an infinite subset. The projection $P_\Lambda : \mathscr{P}_d \rightarrow \mathscr{P}_d$ is defined by  $P_\Lambda f = \sum_{n \in \Lambda} \hat{f}(n) e^{i \langle \cdot, n \rangle}$.
  
\begin{defn}
In the above situation, $P_\Lambda$ will be called a Paley projection if there is some $K > 0$, such that $$ \n P_\Lambda f \n_{S,2} \le K \n f \n_{S,1}, \text{\quad for all $f\in \mathscr{P}_d $,}$$ i.e. for all $f \in \mathscr{P}_d$, we have $$\Big(\sum_{n \in \Lambda} Q_S(n) | \hat{f}(n) |^2\Big)^{1/2} \le K \n f \n_{S,1}.$$
\end{defn}

If $P_\Lambda$ is a Paley projection, then the natural mapping $W^S_2(\T^d)_\Lambda \rightarrow W^S_1(\T^d)_\Lambda$ is an isomorphism. $P_\Lambda$ can be uniquely extended to be an projection on $W^S_1(\T^d)$, which is still denoted by $P_\Lambda: W^S_1(\T^d) \rightarrow W^S_1(\T^d).$

For the operator space theory, we refer to the book \cite{Pisier1} for a detailed study. Here we recall that the usual $L_p$-spaces are equipped with a natural operator space structure (in short o.s.s. For the detail, see e.g.\cite{Pisier1} p.178 -p.180). Hence $W^S_1(\T^d)$ is an operator space by the embedding $W^S_1(\T^d) \subset \ell_1^{|S|}(L_1(\T^d))$. 

We will use the following useful fact: Let $E \subset L_1(\Omega, \mu)$ and $F \subset L_1(M, \nu)$ be two operator subspaces. Then a linear operator $u: E \rightarrow F$ is completely bounded iff $u \otimes I_{S_1} : E(S_1) \rightarrow F(S_1)$ is bounded, where $S_1$ is the set of trace class operators and $E(S_1)$ and $F(S_1)$ are the closures of $E \otimes S_1$ and $F \otimes S_1$ in $L_1(\Omega, \mu; S_1)$ and $L_1(M, \nu; S_1)$ respectively. Moreover, $$ \n u \n_{cb} = \n u \otimes I_{S_1}\n.$$ 

Recall that the operator space $C + R$ is a homogeneous Hilbertian operator space, which is determined by the following fact: if $(e_k)$ is an orthonormal basis of $C + R$ and $(x_k)$ is a finite sequence in $S_1$, then $$\n \sum_k x_k \otimes e_k \n_{S_1[C+ R]} = \inf\{\n (\sum_k y_k^* y_k)^{1/2}\n_{S_1}+  \n (\sum_k z_k z_k^* )^{1/2}\n_{S_1} \},$$ where the infimum runs over all possible decompositions $ x_k = y_k + z_k$. (For the definition of $S_1[E]$, see \cite{Pisier2}). For convience, we will denote $$||| (x_k) ||| : = \n \sum_k x_k \otimes e_k \n_{S_1[C+ R]}.$$

The following theorem of Lust-Piquard and Pisier will be used in this note. 

\begin{thm}(Lust-Piquard {\&} Pisier)
Let $(n_k)$ be any increasing sequence which is lacunary \`a la Hadamard, i.e. $\underline{\lim} \frac{n_{k+1}}{n_k} > 1$. Then there exists $K  > 0$, such that for any finite sequence $(x_k)$ in $S_1$, we have \begin{eqnarray}\label{nonkhintchine}\frac{1}{K} |||(x_k) ||| \le \n \sum_k x_k e^{i n_k t} \n_{L_1(\T; S_1)} \le K ||| (x_k) |||.\end{eqnarray}
\end{thm}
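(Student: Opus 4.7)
The plan is to follow the strategy of Lust-Piquard's original proof of the non-commutative Khintchine inequality for Rademacher sequences and upgrade it to the Hadamard-lacunary trigonometric system. The engine throughout is the classical Paley inequality, which says that a Hadamard-lacunary subset of $\Z$ is a $\Lambda(p)$-set for every $p<\infty$ with constants depending only on the lacunarity ratio; combined with Parseval's identity this gives the ``$L_2$-like'' control on lacunary series that will do the work in the $S_1$-valued setting.

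For the upper bound I would first use the triangle inequality in $L_1(\T;S_1)$ together with the definition of $|||(x_k)|||$ to reduce to the one-sided estimate
$$\Bigl\|\sum_k y_k e^{in_k t}\Bigr\|_{L_1(\T;S_1)} \le K \Bigl\|\bigl(\sum_k y_k^* y_k\bigr)^{1/2}\Bigr\|_{S_1}$$
together with the symmetric row inequality obtained by taking adjoints. The key identity is Parseval's,
$$\int_\T \Bigl(\sum_k y_k e^{in_k t}\Bigr)^* \Bigl(\sum_k y_k e^{in_k t}\Bigr)\,dt = \sum_k y_k^* y_k,$$
which links the time-frequency side to the column square function. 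From this identity I would combine the scalar Paley inequality (transferred to $S_1$-valued functions via Fubini and trace duality) with the comparison of the ``column norm'' and $S_1$ norm, interpolating between the Parseval $L_2$ bound and an elementary $L_\infty$ estimate to land at $L_1$.

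For the lower bound I would use trace duality. Since $(C+R)^*=C\cap R$, the estimate
$|||(x_k)|||\le K\,\|\sum_k x_k e^{in_k t}\|_{L_1(\T;S_1)}$
is equivalent, after testing against $L_\infty(\T;B(H))$, to the non-commutative Bessel-type inequality
$$\max\Bigl\{\bigl\|(\sum_k \hat g(n_k)^*\hat g(n_k))^{1/2}\bigr\|,\bigl\|(\sum_k \hat g(n_k)\hat g(n_k)^*)^{1/2}\bigr\|\Bigr\}\le K\,\|g\|_{L_\infty(\T;B(H))}$$
for every $g\in L_\infty(\T;B(H))$. I would prove this by first establishing its $L_p$-version for large $p$ (which follows cleanly from the $\Lambda(p)$-property and the easy direction of non-commutative Khintchine at $L_p$, $1<p<\infty$), and then extrapolating to $p=\infty$.

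The hardest step is precisely this last extrapolation: the naive duality route from $L_p$ to $L_1$ loses constants that blow up as $p\to\infty$, which is exactly the phenomenon that makes the $L_1$--$L_\infty$ endpoint of non-commutative Khintchine so delicate. To overcome this I would appeal to the same $H_1$--$BMO$ machinery that Lust-Piquard and Pisier used in the original argument: replace $L_1(\T;S_1)$ by the non-commutative Hardy space, exploit the fact that for Hadamard-lacunary spectra the natural analytic projection is bounded (Riesz projection on a lacunary band), and conclude by non-commutative $H_1$--$BMO$ duality. If this works, the constants depend only on $\liminf n_{k+1}/n_k$, as required.
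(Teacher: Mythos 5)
The paper gives no proof of this statement: it is quoted from Lust-Piquard and Pisier \cite{Lust_Pisier}, so your proposal can only be measured against their published argument. Measured that way (or on its own terms) it has a genuine gap in the hard direction, namely the lower bound. Your duality reduction is backwards. The inequality $|||(x_k)|||\le K\,\|\sum_k x_k e^{in_k t}\|_{L_1(\T;S_1)}$ is the boundedness of a map defined on a \emph{subspace} of $L_1(\T;S_1)$ with values in $S_1[C+R]$; its adjoint goes from the unit ball of $C\cap R$ (with $B(H)$ coefficients) into the \emph{quotient} $L_\infty(\T;B(H))/\Lambda^\perp$. Unwinding this, the statement dual to the lower bound is an extension property: for every $(a_k)$ with $\max\{\|(\sum_k a_k^*a_k)^{1/2}\|,\|(\sum_k a_ka_k^*)^{1/2}\|\}\le 1$ there must exist $g\in L_\infty(\T;B(H))$ with $\|g\|_\infty\le K$ and $\hat g(n_k)=a_k$ for all $k$. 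The ``non-commutative Bessel-type inequality'' you propose to prove instead is the dual of the \emph{upper} bound, and it holds trivially with $K=1$, since $\sum_k\hat g(n_k)^*\hat g(n_k)\le\int_{\T} g(t)^*g(t)\,dt\le\|g\|_\infty^2$ and likewise for rows. A reduction that turns the hard half of the non-commutative Khintchine inequality into a triviality cannot be correct, so the subsequent $L_p$-extrapolation and $H^1$--$BMO$ discussion is built on the wrong statement. (Relatedly, ``interpolating between the Parseval $L_2$ bound and an elementary $L_\infty$ estimate to land at $L_1$'' cannot work: $L_1$ is not an interpolation space between $L_2$ and $L_\infty$. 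The upper bound is in fact a direct consequence of the tracial Cauchy--Schwarz inequality $|\mathrm{tr}\sum_k a_k^* x_k|\le\|(\sum_k a_k^*a_k)^{1/2}\|_{B(H)}\|(\sum_k x_k^*x_k)^{1/2}\|_{S_1}$ combined with the operator Bessel inequality above, and it needs no lacunarity at all.)

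The actual Lust-Piquard--Pisier proof of the lower bound is direct rather than dual. One factorizes $f=\sum_k x_ke^{in_kt}$ pointwise as $f=gh$ with $g,h\in L_2(\T;S_2)$ and $\|g\|_{L_2(S_2)}\|h\|_{L_2(S_2)}\le\|f\|_{L_1(S_1)}$ (take $h=|f|^{1/2}$ and $g=u|f|^{1/2}$ from the polar decomposition), writes $x_k=\hat f(n_k)=\sum_{m\in\Z}\hat g(m)\hat h(n_k-m)$, and splits this convolution into $y_k+z_k$ according to a partition of the frequency axis chosen so that, thanks to Hadamard lacunarity, the resulting families of translated frequency sets have bounded overlap; operator Cauchy--Schwarz together with Parseval for $g$ and $h$ then bounds $\|(\sum_k y_k^*y_k)^{1/2}\|_{S_1}$ and $\|(\sum_k z_kz_k^*)^{1/2}\|_{S_1}$ by a multiple of $\|g\|_{L_2(S_2)}\|h\|_{L_2(S_2)}$. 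This explicit construction of the decomposition $x_k=y_k+z_k$ is where lacunarity genuinely enters and is precisely the ingredient your outline lacks. Note finally that the scalar $\Lambda(p)$ property you invoke at the outset does not transfer to $S_1$-valued series in the way suggested: the whole point of the $C+R$ norm is that no single square function controls lacunary series in $L_1(\T;S_1)$.
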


\begin{rem}\label{uncond}
Under the same condition as in the above theorem, by the equivalence \eqref{nonkhintchine}, it is easy to see that if $(a_k)$ is a bounded sequence in $\C$, then $$ \n\sum_k a_k x_k e^{i n_k t}  \n_{L_1(\T; S_1)} \lesssim  \n \sum_k x_k e^{i n_k t} \n_{L_1(\T; S_1)}.$$ If $(a_k)$ is moreover uniformly separated from 0, i.e. $\inf_k | a_k | > 0$, then $$ \n\sum_k a_k x_k e^{i n_k t}  \n_{L_1(\T; S_1)} \approx  \n \sum_k x_k e^{i n_k t} \n_{L_1(\T; S_1)}.$$
\end{rem}

\begin{defn}
A smoothness $S \subset \N^d$ is said to have Property (O) if there are $\alpha, \beta \in S$ with $ |\alpha| \not\equiv | \beta | \text{ mod 2}$ and $ c= (c(j))$ with $c(j) > 0$ such that: 
\begin{itemize}
\item[(i)] $\langle \alpha, c \rangle = \langle \beta, c \rangle = 1$ 
\item[(ii)] $\langle \gamma, c \rangle \le 1 \text{ for all $\gamma \in S$.}$
\end{itemize}
\end{defn}

\begin{rem}\label{O'}
Assume that $S$ has property (O) and let $\alpha, \beta \in S$ be the two points in $S$ as in the definition of property (O). Then there exists a sequence $(n_k) \subset \N^d$ such that \begin{eqnarray}\label{nk}\lim_k \inf_j n_k(j) = \infty\end{eqnarray} and \begin{eqnarray}\label{rho} \rho = \min\{ \inf_k \frac{| \sigma_\alpha(n_k) |}{Q_S(n_k)^{1/2}},  \inf_k \frac{| \sigma_\beta(n_k) |}{Q_S(n_k)^{1/2}}\} > 0.\end{eqnarray} For the proof, see Proposition 1.2 in \cite{PW}.
\end{rem}

We end this section by stating the following technical proposition from \cite{PW}.
\begin{prop}(Pe{\l}czy{\'n}ski {\&} Wojciechowski )\label{techprop}
Let $S \subset \N^d$ be a smoothness. Then given $\varepsilon$ with $0 < \varepsilon < 1$ and $ D = 1, 2, \cdots$ there exists $\rho = \rho (D, \varepsilon) > 1$ such that, for every $m, n \in \Z^d$, if $\min_{1 \le j \le d} | n(j)| \ge \rho$ and if $\sum_{j = 1}^d | n(j) - m(j)| \le D$ then $$| 1 - Q_S(n)Q_S(m)^{-1} | < \varepsilon;$$ $$ \sum_{\alpha \in S} \left|\frac{|\sigma_\alpha(m)|}{Q_S(m)^{1/2}}- \frac{|\sigma_\alpha(n)|}{Q_S(n)^{1/2}}\right|^2 < \varepsilon^2;$$ $$ \sum_{\alpha \in S} \left|\frac{\sigma_\alpha(m)}{Q_S(m)^{1/2}}- \frac{\sigma_\alpha(n)}{Q_S(n)^{1/2}}\right|^2 < \varepsilon^2.$$
\end{prop}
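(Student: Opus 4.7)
The plan is to reduce all three conclusions to a single uniform fact: for every $\gamma \in S$, the ratio $r_\gamma := \sigma_\gamma(m)/\sigma_\gamma(n)$ is close to $1$ at a rate depending only on $D$, $S$ and $\rho$. Once this is set up, (1)--(3) follow from the tautology $\sum_{\alpha \in S} |\sigma_\alpha(n)|^2 = Q_S(n)$ together with short algebra.

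To start, I take $\rho > D$. The hypothesis $\sum_{j=1}^d |n(j) - m(j)| \le D$ gives $|n(j) - m(j)| \le D$ for every $j$, so $|m(j)| \ge \rho - D > 0$ and $m(j)$ has the same sign as $n(j)$. Writing $m(j)/n(j) = 1 + \delta_j$ with $|\delta_j| \le D/\rho$, the quantity
$$r_\gamma = \prod_{j=1}^d \left( \frac{m(j)}{n(j)} \right)^{\gamma(j)}$$
is a well-defined positive real number lying in $[(1-D/\rho)^M, (1+D/\rho)^M]$, where $M := \max_{\gamma \in S}|\gamma|$; hence $|r_\gamma - 1| \le \eta(\rho)$ uniformly in $\gamma \in S$, with $\eta(\rho) \to 0$ as $\rho \to \infty$. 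Moreover, because $m$ and $n$ share signs componentwise, $|\sigma_\gamma(m)|/|\sigma_\gamma(n)|$ coincides with $r_\gamma$.

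For (1), substituting $|\sigma_\gamma(m)|^2 = |\sigma_\gamma(n)|^2 r_\gamma^2$ into $Q_S(m)$ gives
$$\left| \frac{Q_S(m)}{Q_S(n)} - 1 \right| \le \max_{\gamma \in S} |r_\gamma^2 - 1| = O(\eta(\rho)),$$
and this also controls $|1 - Q_S(n)/Q_S(m)|$ once $\eta(\rho)$ is small. For (3), I factor
$$\frac{\sigma_\alpha(m)}{Q_S(m)^{1/2}} - \frac{\sigma_\alpha(n)}{Q_S(n)^{1/2}} = \frac{\sigma_\alpha(n)}{Q_S(n)^{1/2}} \left( r_\alpha \left( \frac{Q_S(n)}{Q_S(m)} \right)^{1/2} - 1 \right),$$
take the modulus squared, sum over $\alpha \in S$, and apply $\sum_\alpha |\sigma_\alpha(n)|^2/Q_S(n) = 1$ to bound the total by $\max_{\alpha \in S} |r_\alpha (Q_S(n)/Q_S(m))^{1/2} - 1|^2 = O(\eta(\rho)^2)$. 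Inequality (2) is the same argument with $r_\alpha$ replaced by $|r_\alpha|$; the two coincide by the componentwise sign observation above.

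The only real obstacle is quantitative: given $\varepsilon$ and $D$, one must choose $\rho = \rho(D, \varepsilon)$ so that $\eta(\rho)$ and the induced deviation $|1 - Q_S(n)/Q_S(m)|$ compose to less than $\varepsilon$ (respectively $\varepsilon^2$) in each target estimate. Since $S$ is finite and both $M$ and $D$ are fixed, such a $\rho$ is given explicitly by unwinding the elementary bounds above, and no conceptual difficulty remains.
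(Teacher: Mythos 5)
Your proof is correct. Note that the paper itself gives no proof of this proposition --- it is quoted verbatim from Pe{\l}czy{\'n}ski--Wojciechowski with a citation to \cite{PW} --- so there is nothing internal to compare against; but your reduction to the single estimate $|r_\gamma - 1| \le \eta(\rho)$ for $r_\gamma = \sigma_\gamma(m)/\sigma_\gamma(n) = \prod_j (m(j)/n(j))^{\gamma(j)}$, combined with the normalization $\sum_{\alpha \in S} |\sigma_\alpha(n)|^2 / Q_S(n) = 1$ (which makes $Q_S(m)/Q_S(n)$ a convex combination of the $r_\gamma^2$), is a complete and essentially the standard elementary argument. The two points worth making explicit when writing it up are that $\rho > D$ forces $m(j) \ne 0$ and $\operatorname{sign} m(j) = \operatorname{sign} n(j)$ for every $j$ (so $r_\gamma > 0$ and inequality (2) really does collapse to (3)), and that $Q_S(n), Q_S(m) \ge 1$ since $0 \in S$, which justifies passing from a bound on $|Q_S(m)/Q_S(n) - 1|$ to one on $|1 - Q_S(n)Q_S(m)^{-1}|$; you address both.
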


\section{Main result}
\begin{thm}\label{mainthm}
If the smoothness $S$ satisfies Property (O), then there exists a completely bounded Paley projection $P_\Lambda: W^S_1(\T^d) \to W^S_1(\T^d)$ associated to some infinite sequence $\Lambda = (n_k) \subset \Z^d$. Moreover, the linear map $\hat{P}: W^S_1(\T^d)_\Lambda \rightarrow C + R$ defined by $$\hat{P} f = \sum_{k= 1}^\infty Q_S(n_k)^{1/2} \hat{f}(n_k) e_k $$ is a complete isomorphism, where $(e_k)_{k =1}^\infty$ is an orthonomal basis of $C+ R$.
\end{thm}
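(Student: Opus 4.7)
The plan is to follow the scheme of Pe{\l}czy{\'n}ski--Wojciechowski while upgrading each scalar estimate to the operator-valued setting through the L-P-P non-commutative Khintchine theorem. By Property (O) and Remark \ref{O'}, fix $\alpha,\beta\in S$ of opposite parities and $(n_k)\subset\N^d$ with $\lim_k\inf_j n_k(j)=\infty$ and $|\sigma_\alpha(n_k)|,|\sigma_\beta(n_k)|\ge\rho\,Q_S(n_k)^{1/2}$; after passing to a subsequence one may assume $(n_k(1))_k$ is Hadamard lacunary and that Proposition \ref{techprop} applies with arbitrarily small $\varepsilon$ from some index on. Set $\Lambda=(n_k)$. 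A first technical step is to transfer L-P-P from $\T$ to $\T^d$: for $(x_k)\subset S_1$,
$$\left\|\sum_k x_k e^{i\langle\cdot,n_k\rangle}\right\|_{L_1(\T^d;S_1)}\approx|||(x_k)|||,$$
obtained by applying L-P-P to the lacunary sequence $(n_k(1))$ in the $t_1$-slice, noting that the resulting unimodular factors $e^{i\sum_{j\ge 2}n_k(j)t_j}$ do not affect $|||\cdot|||$, and integrating over $(t_2,\ldots,t_d)$.

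For the lower estimate on $\hat P$, take $f\in W_1^S(\T^d)_\Lambda\otimes S_1$, so that $\partial^\gamma f=\sum_k\sigma_\gamma(n_k)\hat f(n_k)e^{i\langle\cdot,n_k\rangle}$. The transferred L-P-P together with Remark \ref{uncond} (using $|\sigma_\gamma(n_k)|\le Q_S(n_k)^{1/2}$) yields
$$\|\partial^\gamma f\|_{L_1(\T^d;S_1)}\approx|||(\sigma_\gamma(n_k)\hat f(n_k))|||\lesssim|||(Q_S(n_k)^{1/2}\hat f(n_k))|||.$$
Summing over $\gamma\in S$ gives $\|f\|_{W_1^S(\T^d;S_1)}\lesssim|||(Q_S(n_k)^{1/2}\hat f(n_k))|||$, so $\hat P^{-1}$ is completely bounded.

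The harder direction, and the main obstacle, is
$$|||(Q_S(n_k)^{1/2}\hat f(n_k))|||\lesssim\|f\|_{W_1^S(\T^d;S_1)}\quad\text{for every }f.$$
This does \emph{not} reduce to a scalar Paley inequality in $L_1$, which in fact fails for Hadamard lacunary sequences. Property (O) is essential here: the two indices $\alpha,\beta$ of opposite parities combined with the extremality $\langle\alpha,c\rangle=\langle\beta,c\rangle=1\ge\langle\gamma,c\rangle$ for $\gamma\in S$ allow the construction, in the manner of \cite{PW}, of an $S$-adapted generalized Riesz product whose Fourier coefficients extract the required weighted values $Q_S(n_k)^{1/2}\hat f(n_k)$ in duality with $\|f\|_{W_1^S}$. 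To upgrade to complete boundedness I would dualize via $(S_1[C+R])^*=B(H)[C\cap R]$: a test sequence $(a_k)$ with bounded column and row norms must be realized, through the operator-valued Riesz product, as the $\Lambda$-Fourier coefficients of some $g=\sum_{\gamma\in S}\partial^\gamma\phi_\gamma$ with $\sum_\gamma\|\phi_\gamma\|_{L_\infty(\T^d;B(H))}$ under control; pairing $f$ with such $g$, and invoking Proposition \ref{techprop} to justify the approximations, would close the inequality. The principal technical hurdle is carrying out this non-commutative Riesz-product construction and verifying that the derivative decomposition from \cite{PW} survives in the operator-valued setting.
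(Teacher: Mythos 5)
Your treatment of the easy direction (that $\hat P^{-1}$ is completely bounded) is essentially the paper's, and your transfer of the Lust-Piquard--Pisier equivalence from $\T$ to $\T^d$ by slicing in the first variable is exactly the device used in Lemma \ref{essential_lem}. The genuine gap is the hard direction: you correctly observe that it cannot come from a scalar $L_1$ Paley inequality, but the operator-valued Riesz product and the duality against $(S_1[C+R])^*$ that you propose in its place are never carried out --- you explicitly defer ``the principal technical hurdle,'' and that hurdle is precisely the content of the theorem. As written, the inequality $|||(Q_S(n_k)^{1/2}\hat f(n_k))|||\lesssim \n f\n_{W^S_1(\T^d;S_1)}$ is not established.

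Moreover, the route you sketch is harder than necessary, and the paper's proof shows how to avoid it entirely. No operator-valued Riesz product is needed: the scalar Riesz product $R=\prod_k(1+\cos\langle x,n_k\rangle)$ is a positive measure of total mass one, so convolution with it is automatically completely contractive on $L_1(\T^d;S_1)$. The extraction of the weighted coefficients is done \emph{before} any projection, by the map $Mf=\partial^\alpha f+\tau\ell\,\partial^\beta f-(\text{correction})$, which is completely bounded for elementary reasons: $\partial^\alpha$ and $\partial^\beta$ are coordinates of the isometric embedding $W^S_1\subset\ell_1^{|S|}(L_1)$, and the correction term has absolutely summable coefficients by condition (iv), which is arranged via Proposition \ref{techprop} and the near-cancellation of $\sigma_\alpha+\tau\ell\sigma_\beta$ at $-n_k$. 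The composite $P_{\Sigma,\Lambda}M_RM$ lands in $L_1(\T^d)_\Sigma$ with $\Sigma$ having positive, injective first coordinate, so the noncommutative $H^1$ Paley inequality of Lust-Piquard--Pisier applies through Lemma \ref{essential_lem}; a direct computation then identifies the composite with the diagonal multiplier $f\mapsto\sum_k\rho_kQ_S(n_k)^{1/2}\hat f(n_k)e^{i\langle\cdot,n_k\rangle}$, where $|\rho_k|=\tfrac12\bigl(|\sigma_\alpha(n_k)|+\ell|\sigma_\beta(n_k)|\bigr)Q_S(n_k)^{-1/2}\ge\tfrac12\rho(1+\ell)$. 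This lower bound is where the opposite-parity condition in Property (O) actually enters: the choice $\tau=i^{|\alpha|-|\beta|}$ forces constructive interference at $+n_k$ (and destructive interference at $-n_k$, which is what makes the correction in $M$ summable). This composition-of-completely-bounded-maps argument is the missing idea in your proposal; the duality approach would in addition require justifying the description of $(S_1[C+R])^*$ and building an operator-valued Riesz product, neither of which is needed.
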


The following lemma will be used in the proof of Theorem \ref{mainthm}.

\begin{lem}\label{essential_lem}
Assume that $\Sigma  \subset \Z^d$ is an infinite subset satisfies the conditions $n(1) \ge 1, \text{ $\forall n \in \Sigma \setminus \{0\}$}$ and the projection to the first coordinate $\Sigma \rightarrow \N$ defined by $n  \mapsto  n(1)$ is injective. Assume moreover that $\Lambda = (n_k)_{k = 1}^\infty$ is an infinite sequence in $\Sigma$ such that  $$\inf_k \frac{n_k(1)}{n_{k-1}(1)}  > 1.$$ Then the natural map $$P_{\Sigma, \Lambda}: L_1(\T^d)_{\Sigma} \rightarrow L_1(\T^d)_\Lambda$$ is completely bounded and $L_1(\T^d)_{\Lambda}$ is completely isomorphic to $C + R$.
\end{lem}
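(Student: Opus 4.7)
The plan is to reduce both assertions to one-dimensional statements via the Fubini identification $L_1(\T^d; S_1) = L_1(\T^{d-1}; L_1(\T; S_1))$, and then to apply the Lust-Piquard--Pisier theorem to the Hadamard lacunary sequence $(n_k(1))$ together with the unimodular unconditionality noted in Remark \ref{uncond}.

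For the complete isomorphism $L_1(\T^d)_\Lambda \simeq C+R$, I would fix $f = \sum_k x_k e^{i\langle \cdot, n_k\rangle}$ in $L_1(\T^d; S_1)_\Lambda$ and, for each $y = (x(2), \ldots, x(d)) \in \T^{d-1}$, write the slice as $f(\cdot, y) = \sum_k \bigl(x_k e^{i\langle y, n_k'\rangle}\bigr) e^{i n_k(1)\, \cdot}$, where $n_k' = (n_k(2), \ldots, n_k(d))$. The Lust-Piquard--Pisier theorem (applied to the lacunary sequence $(n_k(1))$) gives $\|f(\cdot, y)\|_{L_1(\T; S_1)} \approx |||(x_k e^{i\langle y, n_k'\rangle})|||$, and since $|e^{i\langle y, n_k'\rangle}| = 1$, Remark \ref{uncond} reduces this to $|||(x_k)|||$ uniformly in $y$. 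Integrating over $\T^{d-1}$ produces $\|f\|_{L_1(\T^d; S_1)} \approx |||(x_k)|||$, which is precisely the complete isomorphism $\sum_k x_k e^{i\langle \cdot, n_k\rangle} \mapsto \sum_k x_k \otimes e_k$ from $L_1(\T^d)_\Lambda$ onto $C+R$.

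For the complete boundedness of $P_{\Sigma, \Lambda}$, the $S_1$-tensor criterion for cb norms reduces matters to showing $|||(x_{n_k})||| \le C \|g\|_{L_1(\T^d; S_1)}$ for $g = \sum_{n \in \Sigma} x_n e^{i\langle \cdot, n\rangle}$ in $L_1(\T^d; S_1)_\Sigma$. I slice once more: $g(\cdot, y)$ has spectrum in $\Sigma' := \{n(1) : n \in \Sigma\}$, and by the injectivity of $n \mapsto n(1)$ on $\Sigma$ the Fourier coefficient of $g(\cdot, y)$ at the lacunary frequency $n_k(1)$ equals $x_{n_k} e^{i\langle y, n_k'\rangle}$. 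Because $\Sigma' \subset \N$, the slice $g(\cdot, y)$ lies in the vector-valued Hardy space $L_1(\T; S_1)_\N$; applying the vector-valued $H^1$-Paley inequality for the lacunary sequence $(n_k(1))$ gives $|||(x_{n_k} e^{i\langle y, n_k'\rangle})||| \le C \|g(\cdot, y)\|_{L_1(\T; S_1)}$. Stripping the unimodular phases by Remark \ref{uncond} and integrating over $y \in \T^{d-1}$ produces the desired bound.

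The principal obstacle is the vector-valued $H^1$-Paley inequality invoked in the last step: for $h \in L_1(\T; S_1)$ with spectrum in $\N$ and $(m_k) \subset \N$ Hadamard lacunary, one needs $|||(\hat{h}(m_k))||| \le C \|h\|_{L_1(\T; S_1)}$. This is the operator-space analog of Paley's classical theorem, and it can be derived from the non-commutative Khintchine inequalities of Lust-Piquard together with the atomic decomposition of $H^1$, or equivalently from the cb boundedness of the lacunary projection on a vector-valued Hardy space. All remaining ingredients---Fubini, the Lust-Piquard--Pisier theorem, and the unimodular unconditionality remark---are already available in the excerpt.
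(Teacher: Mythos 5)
Your proposal is correct and follows essentially the same route as the paper: slice via Fubini to reduce to the circle, apply the Lust-Piquard--Pisier theorem to the lacunary sequence $(n_k(1))$ together with the unimodular invariance of the $C+R$ norm to get $L_1(\T^d)_\Lambda \simeq C+R$, and use the operator-valued $H^1$-Paley inequality for the boundedness of the projection. The one ingredient you flag as an obstacle, the vector-valued Paley inequality $|||(\hat h(m_k))||| \lesssim \n h\n_{H^1(\T;S_1)}$, is exactly Corollary 0.4 of the cited Lust-Piquard--Pisier paper, which is what the author invokes, so nothing needs to be rederived; if anything, your slice-by-slice application of Paley followed by phase-stripping is cleaner than the paper's passing claim that $L_1(\T)_\Gamma$ itself is completely isomorphic to $C+R$.
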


\begin{proof}
We shall prove that the projection $L_1(\T^d; S_1)_\Sigma \rightarrow L_1(\T^d; S_1)_\Lambda$ is bounded. Let $\Gamma$ be the image of the first projection $\Sigma \to \N$. The injectivity of $n \mapsto n(1)$ on $\Sigma$ implies that there is a map $m: \Gamma \to \Z^{d-1}$ such that $n = (n(1), m(n(1)))$ for all $n \in \Sigma$. 
We write $x \in \T^d$ as a pair $x = (t, y) \in \T \times \T^{d-1}$.  To each $y \in \T^{d-1}$ and $g \in L_1(\T^d; S_1)$ we associate with a function $g_y: \T \rightarrow S_1$ defined by $g_y(t) = g(t, y)$. If $g \in L_1(\T^d; S_1)_\Sigma$, then $$ g(t, y) \sim \sum_{n \in \Sigma} \hat{g}(n) e^{i\langle (t, y), n \rangle} = \sum_{n(1) \in \Gamma}  \hat{g}(n) e^{i t n(1)} e^{i \langle y, m(n(1)) \rangle}.$$ This implies that $\text{spec}(g_y) \subset \Gamma \subset \N$ and $$ \hat{g_y}(n(1)) = \hat{g}(n)e^{i \langle y, m(n(1)) \rangle}.$$ By \cite{Lust_Pisier}, as operator space, $L_1(\T)_\Gamma$ is completely isomorphic to $C + R$. Hence for any fixed $y \in \T^{d-1}$, $$ \n \sum_{n(1) \in \Gamma}  \hat{g}(n) e^{i t n(1)} e^{i \langle y, m(n(1)) \rangle}\n_{L_1(\T; S_1)} \approx \n \sum_{n(1) \in \Gamma}  \hat{g}(n) e^{i t n(1)} \n_{L_1(\T; S_1)}.$$ It follows that  \begin{eqnarray*} \n g \n_{L_1(\T^d; S_1)}& =& \int_{\T^{d-1}} \n \sum_{n(1) \in \Gamma}  \hat{g}(n) e^{i t n(1)} e^{i \langle y, m(n(1)) \rangle}\n_{L_1(\T; S_1)} dy \\ & \approx &\n \sum_{n(1) \in \Gamma}  \hat{g}(n) e^{i t n(1)} \n_{L_1(\T; S_1)} \\ &= &\n \sum_{n(1) \in \Gamma}  \hat{g}(n) e^{i t n(1)} \n_{H^1(\T; S_1)}.\end{eqnarray*}
Similarly, we have $$\n P_{\Sigma, \Lambda} g\n_{L_1(\T^d; S_1)} \approx \n \sum_{k = 1}^\infty \hat{g}(n) e^{i t n_k(1)} \n_{H^1(\T; S_1)}.$$ The sequence $(n_k(1))_{k = 1}^\infty$ is lacunary, thus we can apply Corollary 0.4 in \cite{Lust_Pisier} to obtain $$\n \sum_{k = 1}^\infty \hat{g}(n) e^{i t n_k(1)} \n_{H^1(\T; S_1)} \lesssim \n\sum_{n(1) \in \Gamma}  \hat{g}(n) e^{i t n(1)} \n_{H^1(\T; S_1)} .$$ Combining the above inequalities, we have $$ \n P_{\Sigma, \Lambda} g \n_{L_1(\T^d; S_1)} \lesssim \n g \n_{L_1(\T^d; S_1)}.$$ This completes the proof that $P_{\Sigma, \Lambda}: L_1(\T^d)_{\Sigma} \to L_1(\T^d)_\Lambda$ is completely bounded.

The fact that $L_1(\T^d)_\Lambda \approx C+R$ is then easy. Indeed, if $h \in L_1(\T^d; S_1)_\Lambda$, then \begin{eqnarray}\label{CR} \n h \n_{L_1(\T^d; S_1)_\Lambda} \approx \n \sum_{k = 1}^\infty \hat{h}(n) e^{itn_k(1)} \n_{H^1(\T; S_1)} \approx||| (\hat{h}(n))|||.\end{eqnarray} In other words, $L_1(\T^d)_\Lambda \simeq C + R$ completely isomorphically.
\end{proof}

\begin{rem}\label{usefulrem}
In the situation of Lemma \ref{essential_lem}, the map $$\widetilde{P}_{\Sigma, \Lambda}: L_1(\T^d)_{\Sigma} \rightarrow C+R$$ defined by $\widetilde{P}_{\Sigma, \Lambda} f = \sum_{k=1}^\infty \hat{f}(n_k) e_k$, where $e_k$ is an orthonormal basis of $C+R$, is completely bounded.
\end{rem}

\begin{proof}[Proof of Theorem \ref{mainthm}]
Our proof follows the proof of Proposition 2.2 in \cite{PW}. Let $\alpha, \beta \in S$ and $(n_k) \in \N^d$ be as in Remark \ref{O'}. Since $| \alpha | \not \equiv | \beta| \text{ mod 2,}$ one can assume that for all $k$, $$\text{sign}(\frac{\sigma_\alpha(n_k)}{\sigma_\beta(n_k)}) = i^{| \alpha| - | \beta |} = \tau,$$ $$\text{sign}(\frac{\sigma_\alpha(-n_k)}{\sigma_\beta(-n_k)}) = (-i)^{| \alpha| - | \beta |} = - \tau.$$ Here $\text{sign}(z) : = \frac{z}{| z |}$ for $z \in \C\setminus 0$. 

Replacing, if necessary, the sequence $(n_k)$ by a rapidly increasing subsequence, we can assume without loss of generality that the sequence $(n_k)$ satisfies the conditions: \begin{itemize}\item[(i)] $\sum_{r=1}^{k-1} \sum_{j=1}^d n_r(j) < \min_j n_k(j)  \text{ for $k = 2, 3, \cdots,$}$ \item[(ii)] $ \lim_k \frac{|\sigma_\alpha(-n_k)|}{|\sigma_\beta(-n_k)|} = \lim_k \frac{|\sigma_\alpha(n_k)|}{|\sigma_\beta(n_k)|}  = \ell > 0,$ \item[(iii)] $ \sum_{k = 1}^\infty | \sigma_\alpha(-n_k) +\tau \ell \sigma_\beta(-n_k)| Q_S(n_k)^{-1/2} \\ =\sum_{k = 1}^\infty \Big| |\sigma_\alpha(n_k) |- \ell | \sigma_\beta(n_k)|\Big| Q_S(n_k)^{-1/2} < \frac{1}{2},$ \item[(iv)] $\sum_{k=1}^\infty \sum_{m \in B_k} | \sigma_\alpha(-m)  + \tau \ell \sigma_\beta(-m) | Q_S(-m)^{1/2} < 1$,\end{itemize} where $B_1 = \{n_1\}$ and for $k =2, 3, \cdots, $ $$ B_k = \Big\{m \in \Z^d: \sum_{j= 1}^d| m(j) - n_k(j) | \le \sum_{r = 1}^{k-1} \sum_{j=1}^d  n_r(j) \Big\}.$$ Notice that item (iv) follows from (iii) , Proposition \ref{techprop} and also the assumption that $(n_k)$ increase sufficiently fast.

Define $M: W^S_1(\T^d) \rightarrow L_1(\T^d)$ by $$Mf = \partial^\alpha f  + \tau \ell \partial^\beta f - \sum_{k = 1}^\infty \sum_{m \in B_k} ( \sigma_\alpha(-m) + \tau \ell \sigma_\beta (-m) )\hat{f}(-m) e^{-i\langle \cdot, m\rangle}. $$ Then $M$ is completely bounded. Indeed, consider the map $M \otimes I_{S_1}: W^S_1(\T^d; S_1) \rightarrow L_1(\T^d; S_1)$. If $g \in W^S_1(\T^d; S_1)$, then $$\n \partial^\alpha g\n_{L_1(\T^d; S_1)}+ \n \partial^\beta g\n_{L_1(\T^d; S_1)}\le \n g \n_{W^S_1(\T^d; S_1)}.$$ Remember that $\sigma_\gamma (n) \hat{g}(n)  = \int_{\T^d} \partial^\gamma g(x) e^{-i\langle x, n \rangle} dx$, hence for any $\gamma \in S$, $$ \n \sigma_\gamma(n) \hat{g}(n) \n_{S_1} \le \n \partial^\gamma g\n_{L_1(\T^d;S_1)} \le \n g\n_{W^S_1(\T^d; S_1)}. $$ Hence $Q_S(n)^{1/2}\n \hat{g}(n) \n_{S_1} \le | S | \cdot \n g \n_{W^S_1(\T^d; S_1)}.$ Combining with (iv), we have \begin{eqnarray*} & & \n\sum_{k = 1}^\infty \sum_{m \in B_k} ( \sigma_\alpha(-m) + \tau \ell \sigma_\beta (-m) )\hat{g}(-m) e^{-i\langle \cdot, m\rangle} \n_{L_1(\T^d; S_1)} \\ & \le & | S | \cdot \n g \n_{W^S_1(\T^d; S_1)} .\end{eqnarray*} Hence $M \otimes I_{S_1}$ is bounded and $M$ is completely bounded.

Next, consider the measure $\mu_R$ on $\T^d$ given by the Riesz product $$R(x) = \prod_{k=1}^\infty ( 1 + cos\langle x, n_k\rangle) = \prod_{k=1}^\infty ( 1 + \frac{1}{2} e^{ i\langle x, n_k\rangle} +  \frac{1}{2} e^{ -i\langle x, n_k\rangle} ).$$ Then the convolution map $M_R: L_1(\T^d) \rightarrow L_1(\T^d)$ defined by $M_R f = f \ast \mu_R$ is obviously completely contractive. In the definition of this Riesz product, we assume that $\frac{n_{k+1}(1)}{n_k(1)} \ge 3$, for all $k = 1, 2, \cdots$. Notice that we have $$\text{spec}(\mu_R) = \Big\{ d_1 n_1 + d_2 n_2 + \cdots + d_k n_k: k \in \N, d_1, d_2, \cdots d_k \in \{ -1, 0, 1\} \Big\}.$$ 

\textbf{Claim A}: $\text{spec}(\mu_R) \subset \{0\} \cup \bigcup_{k=1}^\infty(B_k \cup (-B_k)).$ Indeed, if $m \in \text{spec}(\mu_R)\setminus \{0\}$, then there exist $k\ge 1$ and $d_1, d_2, \cdots, d_k \in \{ -1, 0, 1\}$, such that $d_k \ne 0$ and $m = d_1 n_1 + d_2 n_2 + \cdots  + d_k m_k$. Replacing $m$ by $-m$, if necessary, one may assume that $d_k = 1$, then $ m - n_k = \sum_{r=1}^{k-1} d_r n_r$, it follows that $\sum_{j=1}^d | m(j) - n_k(j)| \le \sum_{r=1}^{k-1}\sum_{j= 1}^d n_r(j)$, i.e. $m \in B_k$.

\textbf{Claim B}: The projection to the first coordinate $\text{spec}(\mu_R) \rightarrow \Z$ is injective. Indeed, if $n, m \in \text{spec} (\mu_R)$ such that $n(1) = m(1)$, suppose that $n = d_1 n_1 + d_2 n_2 + \cdots + d_k n_k$ and $m = d_1' n_1 + d_2' n_2 + \cdots d_{k'}' n_{k'}$, then by a simple computation (cf. e.g. \cite{Riesz_prod}), we have $k = k'$ and $d_1= d_1'$, $d_2 = d_2', \cdots, d_k = d_k'$, hence $n = m$. In other words, the projection to the first coordinate $\text{spec}(\mu_R) \rightarrow \Z$ is injective. 

Let $\Sigma = \{0\} \cup \bigcup_{k=1}^\infty B_k$. It can be easily checked that the image $\text{Im}(M_R M)$ of the composition operator $M_R M$ is contained in $L_1(\T^d)_\Sigma$. By the definition of $B_k$ and condition (i) on the sequence $(n_k)$ , if $m \in B_k$, then $$m(1) \ge n_k(1)  - \sum_{r=1}^{k-1}\sum_{j=1}^d n_r(j) > 0.$$ 

We are now in the situation of Lemma \ref{essential_lem}, thus we obtain a completely bounded projection $P_{\Sigma, \Lambda}: L_1(\T^d)_\Sigma \rightarrow L_1(\T^d)_\Lambda$. By composition, we obtain the following completely bounded map $$ P_{\Sigma, \Lambda} M_R M : W^S_1(\T^d) \rightarrow L_1(\T^d)_\Lambda.$$ By computation, we have\begin{eqnarray*}P_{\Sigma, \Lambda} f = \sum_{k = 1}^\infty \rho_k Q_S(n_k)^{1/2} \hat{f}(n_k) e^{i \langle \cdot, n_k\rangle},\end{eqnarray*} where $\rho_k = \frac{\sigma_\alpha(n_k) + \tau \ell \sigma_\beta(n_k)}{2Q_S(n_k)^{1/2}}$. By \eqref{rho}, $$| \rho_k | = \frac{1}{2}(| \sigma_\alpha(n_k) | + \ell |\sigma_\beta(n_k)|) Q_S(n_k)^{1/2} \ge \frac{1}{2}\rho (1+ \ell).$$ On the other hand, it is obvious that $|\rho_k| \le \frac{1}{2}(1+\ell)$. Let $g: \T^d \rightarrow S_1$, then \begin{eqnarray*} \n P_\Lambda g \n_{W^S_1(\T^d; S_1)} & = & \sum_{\gamma \in S} \n \sum_{k = 1}^\infty \sigma_\gamma(n_k) \hat{g}(n_k) e^{i\langle \cdot, n_k\rangle} \n_{L_1(\T^d;S_1)} \\ \text{ By \eqref{CR}} &\approx& \sum_{\gamma \in S} \n \sum_{k = 1}^\infty \sigma_\gamma(n_k) \hat{g}(n_k) e^{i t n_k(1)} \n_{L_1(\T;S_1)} \\ \text{By Remark \ref{nonkhintchine}}& \lesssim &  \sum_{\gamma \in S} \n \sum_{k = 1}^\infty Q_S(n_k)^{1/2} \hat{g}(n_k) e^{i t n_k(1)} \n_{L_1(\T;S_1)} \\ \text{By Remark \ref{nonkhintchine} and $| S | < \infty$} & \approx & \n \sum_{k = 1}^\infty \rho_k Q_S(n_k)^{1/2} \hat{g}(n_k) e^{i t n_k(1)} \n_{L_1(\T;S_1)} \\ \text{By \eqref{CR}}&\approx & \n \sum_{k = 1}^\infty \rho_k Q_S(n_k)^{1/2} \hat{g}(n_k) e^{i \langle \cdot, n_k\rangle}\n_{L_1(\T^d;S_1)} \\ & = & \n (P_{\Sigma, \Lambda} M_R M \otimes I_{S_1}) g \n_{L_1(\T^d; S_1)} \\ & \lesssim& \n g \n_{W^S_1(\T^d; S_1)}.\end{eqnarray*} This completes the proof that $P_\Lambda: W^S_1(\T^d) \rightarrow W^S_1(\T^d)$ is completely bounded. For the second assertion of the theorem, we only need to notice that by Remark \ref{O'}, $| \sigma_\alpha(n_k) | \ge \rho Q_S(n_k)^{1/2}$ for all $k$, hence if $g \in W^S_1(\T^d;S_1)_\Lambda$, then \begin{eqnarray*} \n g \n_{W^S_1(\T^d; S_1)}& \ge& \n \partial^\alpha g \n_{L_1(\T^d; S_1)} = \n \sum_{k=1}^\infty \sigma_\alpha(n_k) \hat{g}(n_k) e^{i \langle \cdot, n_k\rangle} \n_{L_1(\T^d; S_1)} \\ & \gtrsim& \n \sum_{k=1}^\infty Q_S(n_k)^{1/2} \hat{g}(n_k) e^{i \langle \cdot, n_k\rangle} \n_{L_1(\T^d; S_1)} \\ &\approx &\n \sum_{k=1}^\infty Q_S(n_k)^{1/2} \hat{g}(n_k)\otimes e_k \n_{S_1[C+R]}.\end{eqnarray*}
\end{proof}

Using Theorem \ref{mainthm}, then by a classical transference method, we have the following corollary, for the definition of quantum torus  $\T_\theta^d$ and harmonic analysis on it, we refer to the paper \cite{Xu_Yin}.
\begin{cor}
Under the same condition of Theorem \ref{mainthm}, there exists a completely bounded Paley projection $P_\Lambda: W^S_1(\T^d_\theta) \rightarrow W^S_1(\T^d_\theta)$ associated to some infinite sequence $\Lambda = (n_k) \subset \Z^d$. 
\end{cor}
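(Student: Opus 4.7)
The plan is to transfer Theorem~\ref{mainthm} from $\T^d$ to $\T^d_\theta$ via the canonical torus action on the quantum torus, as prescribed by the standard transference method (cf.\ \cite{Xu_Yin}). First I would introduce the action $\sigma : \T^d \to \mathrm{Aut}(\T^d_\theta)$ defined on generators by $\sigma_z(U^n) = z^n U^n$ for $z \in \T^d$ and $n \in \Z^d$; each $\sigma_z$ preserves the canonical trace and hence extends to a completely isometric automorphism of $L_1(\T^d_\theta)$. The associated map
\[
J : L_1(\T^d_\theta) \longrightarrow L_1\bigl(\T^d; L_1(\T^d_\theta)\bigr), \qquad J(f)(z) = \sigma_z(f),
\]
is then a complete isometry, and a direct check on trigonometric polynomials yields the intertwining $\partial^\gamma_z J(f)(z) = J(\partial^\gamma f)(z)$ for every $\gamma \in S$. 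Consequently $J$ restricts to a complete isometry
\[
J : W^S_1(\T^d_\theta) \hookrightarrow W^S_1\bigl(\T^d; L_1(\T^d_\theta)\bigr),
\]
where the right-hand side is the natural vector-valued Sobolev space obtained by embedding into $\ell_1^{|S|}(L_1(\T^d; L_1(\T^d_\theta)))$.

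Next I would lift Theorem~\ref{mainthm} to the vector-valued setting: since $P_\Lambda$ is completely bounded on $W^S_1(\T^d)$, the operator $P_\Lambda \otimes I_{L_1(\T^d_\theta)}$, regarded as a Fourier multiplier acting in the $\T^d$-variable, is completely bounded on $W^S_1(\T^d; L_1(\T^d_\theta))$. The key intertwining identity
\[
\bigl(P_\Lambda \otimes I_{L_1(\T^d_\theta)}\bigr) \circ J \;=\; J \circ P_\Lambda^\theta
\]
is then immediate from the expansion $J(f)(z) = \sum_n \hat{f}(n)\, z^n U^n$: applying the classical $P_\Lambda$ in the $z$-variable selects precisely the terms with frequency $n \in \Lambda$, which is exactly $J(P_\Lambda^\theta f)(z)$, where $P_\Lambda^\theta$ denotes the Fourier multiplier $\sum_n \hat{f}(n) U^n \mapsto \sum_{n \in \Lambda} \hat{f}(n) U^n$ on $\T^d_\theta$. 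Combining the three ingredients --- $J$ a complete isometry, $P_\Lambda \otimes I$ completely bounded, and the intertwining identity --- delivers complete boundedness of $P_\Lambda^\theta$ on $W^S_1(\T^d_\theta)$.

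The main obstacle is justifying the vector-valued lift in the second step. The useful fact recorded in Section~2 characterizes complete boundedness between $L_1$-subspaces via $S_1$-valued functions only, whereas here we need to tensor with $L_1(\T^d_\theta)$ in order to accommodate the quantum variable. The remedy is to observe that the same characterization holds with $S_1$ replaced by $L_1(N)$ for any semifinite von Neumann algebra $N$: for subspaces of noncommutative $L_1$-spaces, complete boundedness automatically implies that tensoring with the identity of $L_1(N)$ yields a bounded map. Equivalently, since $L_\infty(\T^d_\theta)$ is hyperfinite, $L_1(\T^d_\theta; S_1)$ embeds completely isometrically into an ultrapower of matrix trace classes, which reduces the required estimate back to the $S_1$-valued case already established in the proof of Theorem~\ref{mainthm}.
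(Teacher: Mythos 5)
Your proposal is correct and is precisely the ``classical transference method'' that the paper invokes (without spelling out) for this corollary: the complete isometry $f \mapsto (z \mapsto \sigma_z(f))$ intertwining $\partial^\gamma$ and the Fourier multipliers, followed by the vector-valued lift of $P_\Lambda$. Your remark on why the $S_1$-valued characterization of complete boundedness upgrades to $L_1(\T^d_\theta)$-valued coefficients (via hyperfiniteness of $L_\infty(\T^d_\theta)$) correctly addresses the only point the paper's one-line citation glosses over.
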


\section*{Acknowledgements}
The author would like to thank Quanhua Xu for inviting him to Universit\'e Franche-Comt\'e and his constant encouragement.

\end{document}